\documentclass[12pt]{amsart}
\usepackage{amssymb,amsmath,amsthm}
\setlength{\textheight}{8.2in}
\addtolength{\oddsidemargin}{-.5in}
\addtolength{\textwidth}{1in}

\numberwithin{equation}{section}
\begin{document}

\theoremstyle{plain}
\newtheorem{theorem}{Theorem}[section]
\newtheorem{lemma}[theorem]{Lemma}
\newtheorem{proposition}[theorem]{Proposition}
\newtheorem{corollary}[theorem]{Corollary}
\newtheorem{conjecture}[theorem]{Conjecture}

\def\mod#1{{\ifmmode\text{\rm\ (mod~$#1$)}
\else\discretionary{}{}{\hbox{ }}\rm(mod~$#1$)\fi}}

\theoremstyle{definition}
\newtheorem*{definition}{Definition}

\theoremstyle{remark}
\newtheorem{remark}{Remark}[section]
\newtheorem{example}{Example}[section]
\newtheorem*{remarks}{Remarks}
\newcommand{\ndiv}{\hspace{-4pt}\not|\hspace{2pt}}
\newcommand{\cc}{{\mathbb C}}
\newcommand{\qq}{{\mathbb Q}}
\newcommand{\rr}{{\mathbb R}}
\newcommand{\nn}{{\mathbb N}}
\newcommand{\zz}{{\mathbb Z}}
\newcommand{\pp}{{\mathbb P}}
\newcommand{\al}{\alpha}
\newcommand{\be}{\beta}
\newcommand{\ga}{\gamma}
\newcommand{\ze}{\zeta}
\newcommand{\om}{\omega}
\newcommand{\mz}{{\mathcal Z}}
\newcommand{\mi}{{\mathcal I}}
\newcommand{\ep}{\epsilon}
\newcommand{\la}{\lambda}
\newcommand{\de}{\delta}
\newcommand{\De}{\Delta}
\newcommand{\Ga}{\Gamma}
\newcommand{\si}{\sigma}
\newcommand{\Exp}{{\rm Exp}}
\newcommand{\legen}[2]{\genfrac{(}{)}{}{}{#1}{#2}}
\def\End{{\rm End}}

\title{Binary forms with three different relative ranks}

\author{Bruce Reznick}
\address{Department of Mathematics, University of 
Illinois at Urbana-Champaign, Urbana, IL 61801} 
\email{reznick@illinois.edu}

\author{Neriman Tokcan}
\address{Department of Mathematics, University of 
Illinois at Urbana-Champaign, Urbana, IL 61801} 
\email{tokcan2@illinois.edu}
\date{\today}
\keywords{complex rank, real rank, binary forms, sums of powers,
  stufe, Sylvester, tensor decompositions} 
\subjclass[2000]{Primary: 11E76, 11P05, 12D15, 14N10}
\begin{abstract}
Suppose $f(x,y)$ is a binary form of degree $d$ with coefficients in
a  field $K \subseteq \cc$. The  {\it $K$-rank of $f$} is the
  smallest number of $d$-th powers of linear forms over $K$ of which $f$ is a
  $K$-linear combination. We prove that for $d \ge 5$, there always
  exists a form of degree $d$ with at least three different ranks
  over various fields. The $K$-rank of a form $f$ (such as $x^3y^2$) may
  depend on whether -1 is a sum of two squares in $K$.
\end{abstract}

\maketitle
 \section{Introduction}
Suppose $f(x,y)$ is a binary form of degree $d$ with coefficients in
a  field $K \subseteq \cc$. The {\it
$K$-length} or {\it $K$-rank of $f$}, $L_K(f)$, is the smallest $r$
for which there is a 
representation 
\begin{equation}\label{E:basic}
f(x,y) = \sum_{j=1}^r \la_j\bigl(\al_{j}x + \be_jy\bigr)^d
\end{equation}
with $ \la_j, \al_j, \be_j \in K$.
In case $K = \mathbb C$ or $\mathbb R$, these are
commonly called the {\it Waring rank} or {\it real Waring rank}. We
shall say that two linear forms are {\it  distinct} if they (or their
$d$-th powers)  
are not proportional. A  representation such as \eqref{E:basic}  is
{\it   honest} if the summands are pairwise distinct; that is, if
$\la_i\la_j(\al_i\be_j-\al_j\be_i) \neq 0$ whenever $i \neq j$. Any
representation in which $r = L_K(f)$ is necessarily honest. 

Of course, if $K \subseteq F \subseteq \cc$, then $f \in F[x,y]$ as well, and
one may consider $L_F(f)$ to be the {\it relative rank} of $f$ with
respect to $F$. It is not hard to find forms with two different
relative ranks; for example, suppose $\ga\notin \qq, \ga^2 \in \qq$ and 
$f(x,y) = (x +\ga y)^d + (x- \ga y)^d \in \qq[x,y]$. Then $L_F(f)$ is
equal to 2 if $\ga \in F$ and $d$ otherwise; see \cite[Thm.4.6]{Re1}.

A form $h \in \cc[x,y]$ is {\it apolar} to $f$ if
$h(D)f = h(\frac{\partial}{\partial x}, \frac{\partial}{\partial
  y})f(x,y) = 0$. If \eqref{E:basic} holds and is honest, then $\prod (\be_j x -
\al_j y)$ is apolar to $f$. Sylvester showed (\cite{S1,S2}, see
Theorem 2.1 below) that this is an if
and only if condition: if $\prod (\be_j x - \al_j y)$ is square-free,
and apolar to $f$, then there exist $\la_j \in \cc$ making  \eqref{E:basic}
true. In \cite{Re1}, the first author showed that the same statement
is true if $\al_j,\be_j,\la_j$ are restricted to be in any field $F
\subseteq \cc$. The computation of determining whether
$h$ is apolar to $f$ is equivalent to the Sylvester algorithm. The set
of all forms which are apolar to a given form 
$f$ is called its {\it apolar ideal} and it is known, see
\cite[Thm.1.44(iv)]{IK}, \cite[Lemma 2.8]{BS} that this is generated
by two relatively prime forms, the sums of whose degrees is $d+2$.
An upper bound on the relative rank is given by \cite[Thm.4.10]{Re1}: if $f \in
K[x,y]$, then $L_K(f) \le \deg f$. 

Sylvester also proved (\cite{S3}, see Corollary 2.3 below), a variation on
Descartes' Rule of Signs for univariate polynomials, which can be
extended (\cite{Re1}) to binary forms. Suppose $f \in \rr[x,y]$ is
{\it hyperbolic}; that is, $f$
splits over $\rr$ ($f$ needn't be square-free), but $f$ is not a
$d$-th power. Then $L_{\rr}(f) = \deg
f$. It was conjectured (and proved for $d \le 4$) in \cite{Re1} that the
converse is also true:  if
$L_{\rr}(f) = \deg f$, then $f$ is hyperbolic (and not a $d$-th power.) This was
proved by  Causa and Re \cite{CR} and Comon and Ottaviani \cite{CO}
when $f$ is square-free, and very recently, unconditionally, by
Blekherman and Sinn \cite[Thm 2.2]{BS}.

The first author showed in \cite{Re1} that for $\phi(x,y) = 3x^5
-20x^3y^2+10xy^4$, we have $L_K(\phi) = 3$ if and only if $\sqrt{-1} \in K$,  
$L_K(\phi)= 4$ for
$K=\qq(\sqrt{-2}),\qq(\sqrt{-3}),\qq(\sqrt{-5}),$ $\qq(\sqrt{-6})$ (at least)
and $L_{\rr}(\phi) = 5$. This example also shows (by taking $K_1 =
\qq(\sqrt{-2})$ and $K_2 = \qq(\sqrt{-3})$)  that $L_{K_1}(f) =
L_{K_2}(f) <  L_{K_1\cap K_2}(f)$ is possible.  Furthermore
(\cite[Cor.5.1]{Re1}), if $f$ has $k$ different ranks, then $\deg f
\ge 2k-1$; so three different ranks 
cannot occur for forms of degree $\le 4$.

The main result of this paper is that in all degrees $d \ge 5$,
 there exist
binary forms of degree $d$ with at least three different ranks over
different fields (see Theorem 3.1). In particular, let $\zeta_m$ denote a
primitive $m$-th root of unity. We shall prove
that if $k \ge 3$ and $p_{2k-1}(x,y)= x^{k-1}y^{k-1}(x-y)$, then 
\[
L_{\qq(\zeta_{k+1})}(p_{2k-1}) = k,\quad L_{\qq(\zeta_{k})}(p_{2k-1}) = k+1,\quad
 L_{\rr}(p_{2k-1}) = 2k-1 > k+1.
\]  
Similarly,
 if $k \ge 3$ and $p_{2k}(x,y)= x^ky^k$, then 
\[
L_{\qq(\zeta_{k+1})}(p_{2k}) = k+1,\quad L_{\qq(\zeta_{k})}(p_{2k}) = k+2,\quad
   L_{\rr}(p_{2k}) = 2k > k+2.
\]
We are not aware of any binary form of any degree with more
than three different ranks. We do not consider forms in more
than two variables in this paper. 

The relative rank can depend on algebraic properties of
the underlying field. The {\it Stufe} of a non-real field $F$, $s(F)$,
is the smallest 
integer $n$ such that $-1$ can be written as a sum of $n$ squares in
$F$. It is already known that $L_{\cc}(x^3y^2) = 4$ 
(from \cite[Prop.3.1]{CCG}) and $L_{\rr}(x^3y^2) = 5$ 
(from \cite[Prop.4.4]{BCG}). We show in Theorem 4.1 that $L_K(x^3y^2)
= 4$ if and only if $s(K) \le 2$ and $L_K(x^3y^2) = 5$ otherwise. (For
more on the real rank of monomials, see \cite{CKOV}.) We show in Theorem 4.2
that if $m$ is a square-free positive integer and  $f(x,y) = \binom 61
x^5y - \binom 63 x^3y^3$, then $L_{\qq(\sqrt{-m})}(f) = 4$ if and only
if $s(\qq(\sqrt{-m})) = 2$ if and only if $m \not\equiv 7 \mod 8$ (see
\cite{N,Sz}), and
$L_{\qq(\sqrt{-7})}(f) = 5$.

We happily acknowledge useful conversations with T. Y. Lam and Steve
Ullom and are extremely grateful to Gerry Myerson for pointing out the
references \cite{N, Sz}.

Part of the work in this paper is taken from the doctoral dissertation
of the second author, being written under the direction of the first
author.  The first author was supported in part by
Simons Collaboration Grant 280987.

\section{Tools}
The following theorems are proved in \cite{Re1} and, for $K = \cc$,
are due to Sylvester \cite{S1,S2} in 1851.

\begin{theorem}\cite[Thm.2.1,Cor.2.2]{Re1}
Suppose $K \subseteq \cc$ is a field, 
\begin{equation}\label{E:S1}
f(x,y) = \sum_{j=0}^d \binom dj a_j x^{d-j}y^j \in K[x,y]
\end{equation}
 and suppose $r \le d$ and
\begin{equation}\label{E:S2}
h(x,y)
= \sum_{t=0}^r 
c_tx^{r-t}y^t = \prod_{j=1}^{r} ( - \beta_j x + \alpha_j y)
\end{equation}
 is  a product of pairwise distinct linear factors, with
 $\alpha_j,\beta_j \in K$.    Then there exist
 $\lambda_j\in K$ so that  
\begin{equation}\label{E:S3}
f(x,y) = \sum_{j=1}^r \lambda_j (\alpha_j x + \beta_j y)^d
\end{equation}
if and only if
\begin{equation}\label{E:S4}
\begin{pmatrix}
a_0 & a_1 & \cdots & a_r \\
a_1 & a_2 & \cdots & a_{r+1}\\
\vdots & \vdots & \ddots & \vdots \\
a_{d-r}& a_{d-r+1} & \cdots & a_d
\end{pmatrix}
\cdot
\begin{pmatrix}
c_0\\c_1\\ \vdots \\ c_r
\end{pmatrix}
=\begin{pmatrix}
0\\0\\ \vdots \\ 0
\end{pmatrix};
\end{equation}
that is, if and only if 
\begin{equation}\label{E:S5}
\sum\limits_{t=0}^r a_{\ell + t} c_t = 0, \qquad \ell = 0,1,\dots,  d-r.
\end{equation}
\end{theorem}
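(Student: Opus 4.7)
The strategy is to reformulate the matrix condition \eqref{E:S4} as the single apolarity relation $h(D)f=0$, and then to establish the equivalence \eqref{E:S3} $\Longleftrightarrow h(D)f=0$ by a dimension count carried out carefully over $K$.

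I would first verify, by a direct monomial expansion, that $h(D)f$ is equal to $\frac{d!}{(d-r)!}$ times the polynomial whose binomial-normalized coefficients (in the sense of \eqref{E:S1}) are $\sum_{t=0}^r c_t a_{m+t}$ for $m=0,\dots,d-r$; hence \eqref{E:S5} is exactly the condition $h(D)f=0$. The forward direction of the theorem is then immediate from the identity $h(D)(\al x+\be y)^d = \frac{d!}{(d-r)!}\,h(\al,\be)(\al x+\be y)^{d-r}$: since $(-\be_j x+\al_j y)$ is a factor of $h$, we have $h(\al_j,\be_j)=0$ for each $j$, and so every summand of \eqref{E:S3} is killed by $h(D)$.

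For the reverse direction, assume $h(D)f=0$ and consider the $K$-linear map $T\colon K^r\to K[x,y]_d$ sending $(\la_1,\dots,\la_r)\mapsto \sum_j \la_j(\al_j x+\be_j y)^d$. Since the $r$ linear forms are pairwise non-proportional, their $d$-th powers are $K$-linearly independent (a Vandermonde-type argument), so $T$ is injective and $\operatorname{image}(T)$ has $K$-dimension $r$ and is contained in $\ker h(D)\cap K[x,y]_d$. The crux is to show this kernel also has $K$-dimension $r$: equality then forces $f\in\operatorname{image}(T)$ and produces the required $\la_j\in K$.

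To compute $\dim_K\ker h(D)$, factor $h(D)=L_1\cdots L_r$ into commuting operators $L_j=\al_j\partial_y-\be_j\partial_x$. Each $L_j$ acts on $\cc[x,y]_n$ ($n\ge 1$) with one-dimensional kernel spanned by $(\al_j x+\be_j y)^n$ and is surjective onto $\cc[x,y]_{n-1}$; an induction on $r$ then shows $h(D)\colon\cc[x,y]_d\to\cc[x,y]_{d-r}$ is surjective with $r$-dimensional kernel. Since the matrix of $h(D)$ in the monomial basis has entries in $K$, its rank is field-independent, so $\dim_K\ker h(D)|_{K[x,y]_d}=r$ as well. The main obstacle is precisely this descent from $\cc$ to $K$---ensuring the $\la_j$ lie in $K$ and not merely in $\cc$---and it is handled by this rank-preservation observation.
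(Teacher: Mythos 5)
Your proposal is correct, and it follows what is essentially the standard route: the paper itself gives no proof (it cites \cite[Thm.~2.1, Cor.~2.2]{Re1}), but it does record immediately after the statement exactly the identity you verify in your first step, namely that the entries of the product in \eqref{E:S4} are, up to the factorial constants, the coefficients of $h(D)f$, so that \eqref{E:S5} is the apolarity condition $h(D)f=0$. Your completion of the argument --- linear independence of the $d$-th powers of pairwise non-proportional linear forms for $r\le d+1$, the computation $\dim_{\cc}\ker\bigl(h(D)|_{\cc[x,y]_d}\bigr)=r$ via the factorization into commuting first-order operators, and the descent to $K$ by invariance of matrix rank under field extension --- is sound and is the same mechanism used in \cite{Re1} to get the $\lambda_j$ in $K$ rather than merely in $\cc$.
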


We remark that if $f$ and $h$ are defined by \eqref{E:S1} and
\eqref{E:S2}, then
\[
h(D)f = \sum_{m=0}^{d-r} \frac{d!}{(d-r-m)!m!}\left( \sum_{i=0}^r
  a_{i+m}c_i \right) x^{d-r-m}y^m.
\]
Thus, Theorem 2.1 provides an algorithm for determining the forms
of a given degree $\le \deg f$ which are apolar to $f$. If
\eqref{E:S4} holds and 
$h$ is square-free, 
then we say that $h$ is a {\it   Sylvester form of degree $r$} for
$f$ over $K$. In other words, $L_K(f) = r$ if and only if 
there is a Sylvester form for $f$ over $K$ of degree $r$.

The next  theorem is a generalization of a 1864 theorem of
Sylvester \cite{S3}; the original applied to real polynomials in one
variable and was adapted to real binary forms in \cite{Re1}.

\begin{theorem}\cite[Thm.3.1,3.2]{Re1}
Suppose $f(x,y)$ is a non-zero real form of degree $d$ with $\tau$
real linear factors (counting multiplicity), $f$ is not the $d$-th
power of a linear form  and 
\begin{equation}  \label{E:sylrep}
f(x,y) = \sum_{j=1}^r \la_j(\cos \theta_j x + \sin \theta_j y)^d,
\end{equation}
where $-\frac {\pi}2 < \theta_1 < \dots < \theta_r \le \frac{\pi}2$,
$r \ge 2$  and $\la_j \neq 0$. If there are $\sigma$ sign changes
in the tuple $(\la_1,\la_2,\dots,\la_r,(-1)^d \la_1)$, then $\tau \le
\sigma$. In particular, $\tau \le r$.
\end{theorem}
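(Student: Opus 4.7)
My plan is to translate the statement into a question about real zeros of a trigonometric polynomial on the circle and then apply a generalized Descartes-type argument.

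Identify the real projective line with $\rr/\pi\zz$ via $(x,y)=(\cos\theta,\sin\theta)$ and set
\[
F(\theta) := f(\cos\theta,\sin\theta) = \sum_{j=1}^r \la_j \cos^d(\theta-\theta_j).
\]
Then $F(\theta+\pi)=(-1)^d F(\theta)$, and a real linear factor $(\al x+\be y)^k\mid f$ with $(\al,\be)\propto(\cos\theta_0,\sin\theta_0)$ yields a zero of $F$ of order at least $k$ at $\theta_0$. Thus $\tau$ equals the number of zeros of $F$ on the fundamental domain $(-\pi/2,\pi/2]$, counted with multiplicity, and the claim reduces to bounding this count by $\sigma$. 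After handling the case $\theta_r=\pi/2$ (equivalently, $y\mid f$) separately, the substitution $t=\tan\theta$ converts $F$ into the real polynomial
\[
p(t) = f(1,t) = \sum_{j=1}^r \la_j\cos^d\theta_j\,\bigl(1+t\tan\theta_j\bigr)^d
\]
of degree $d$, whose real roots with multiplicity recover $\tau$.

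The remaining step is a generalized Descartes rule: for a real combination $\sum_j c_j(1+tu_j)^d$ with $u_j$ in increasing order, the number of real roots is at most the number of sign changes in $(c_1,\dots,c_r,c^*)$, where $c^*$ is a suitable ``wrap-around'' sign capturing behavior at $t=\infty$. I would prove this by induction on $r$, using a Rolle-type reduction: divide $p$ by $(1+tu_1)^d$, differentiate with respect to $t$, and observe that the resulting expression is, up to a power of $1+tu_1$, a combination of only $r-1$ shifted $d$-th powers; between any two real zeros of $p$ in $(-1/u_1,\infty)$ lies a zero of this reduction, and by a careful sign analysis the number of relevant sign changes decreases by at most one through the process. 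Iterating collapses the sum and yields the bound.

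The main obstacle is the bookkeeping: the classical Descartes rule counts sign changes in the monomial basis of $p(t)$, whereas the count of interest lives in the non-standard power-sum basis $\{(1+tu_j)^d\}$. The cyclic ``wrap-around'' entry $(-1)^d\la_1$ precisely encodes the interaction of this basis with the point at infinity (equivalently, the boundary $\theta=\pi/2$ of the fundamental domain), and propagating this $(-1)^d$ twist correctly through each division-and-differentiation step of the induction, while keeping track of multiplicities at each real zero of $F$, is the delicate part of the argument.
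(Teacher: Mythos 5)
Your overall strategy --- pass to $F(\theta)=f(\cos\theta,\sin\theta)$, dehomogenize via $t=\tan\theta$, and run a divide-and-differentiate Rolle induction on the number of summands --- is the classical route to Sylvester's 1864 theorem and is essentially the argument behind \cite[Thm.3.1,3.2]{Re1}, which is all the present paper cites (it gives no proof of its own). But as written your reduction has a concrete error before the induction even starts. The parenthetical ``$\theta_r=\pi/2$ (equivalently, $y\mid f$)'' is false in both directions: $\theta_r=\pi/2$ says that $y^d$ occurs as a summand, whereas $y\mid f$ says that $\deg f(1,t)<d$, and neither implies the other. Consequently the claim that the real roots of $p(t)=f(1,t)$, with multiplicity, ``recover $\tau$'' fails precisely when $f$ has a root at infinity. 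For example, $f(x,y)=(x+y)^2-(x-y)^2=4xy$ has $\tau=2$ and no $\theta_j$ equal to $\pi/2$, yet $p(t)=4t$ has a single real root: the factor $y$ is invisible to $p$, and your separate treatment of $\theta_r=\pi/2$ does not catch this case. The degree deficiency $d-\deg p$ (equivalently, the vanishing of $\sum_j\la_j\sin^d\theta_j$) must be added back into the root count, and controlling that deficiency is exactly the job of the wrap-around entry $(-1)^d\la_1$; your outline never makes that connection, so the affine Descartes-type rule you propose to prove would not, by itself, yield the stated bound $\tau\le\sigma$.

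The second, larger issue is that the inductive step is where the entire content of the theorem lives, and you have deferred it. After dividing by $(1+tu_1)^d$, the resulting function has a pole at $t=-1/u_1$ and nonzero finite limits at $\pm\infty$ (when $\deg p=d$), so Rolle must be applied separately on each component of the complement of the pole, zeros of $p$ of multiplicity $m$ contribute only $m-1$ to the derivative at that point, and the extra zeros needed to make the count close must be harvested from the sign behavior at the pole and at infinity --- which is exactly where the cyclic sign $(-1)^d\la_1$ enters and exactly the comparison the theorem asserts. Declaring this ``the delicate part'' is accurate, but it means the proof is not yet there: one cannot verify from your outline that the number of guaranteed zeros of the reduced combination dominates $\tau-1$ while its sign-change count is at most $\sigma-1$. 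To repair the plan, either carry out that sign analysis in full (the affine statement you need is \cite[Thm.3.1]{Re1}, which includes the degree-deficiency correction) or work directly on the circle with $F$ and its antiperiodicity, where the wrap-around term arises naturally from counting sign changes of $\la$ over a full period.
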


\begin{corollary}\cite[Cor.4.11]{Re1}
If $f \in \rr[x,y]$ is a product of $d$ real linear forms and not a
$d$-th power, then $L_{\rr}(f) = d$. 
\end{corollary}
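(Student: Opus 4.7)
The plan is to combine the lower bound supplied by Theorem 2.2 with the general upper bound $L_K(f)\le\deg f$ quoted in the introduction from \cite[Thm.4.10]{Re1}. Since that upper bound already gives $L_{\rr}(f)\le d$, the entire task reduces to establishing $L_{\rr}(f)\ge d$.

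To this end I would fix any optimal real representation
\[
f(x,y) = \sum_{j=1}^r \lambda_j(\alpha_j x+\beta_j y)^d,\qquad r=L_{\rr}(f),
\]
which is automatically honest since $r$ is minimal. The next step is to put this representation into the canonical shape demanded by Theorem 2.2: every nonzero real linear form $\alpha x+\beta y$ is a nonzero real scalar multiple of a unique form $\cos\theta\, x+\sin\theta\, y$ with $\theta\in(-\pi/2,\pi/2]$, and the $d$-th power of that scalar can be absorbed into $\lambda_j$. After reordering we obtain the representation \eqref{E:sylrep} with $-\pi/2<\theta_1<\cdots<\theta_r\le\pi/2$ and all $\lambda_j\ne 0$.

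Now the hypothesis rules out $r=1$, for then $f$ would be a scalar multiple of a $d$-th power of a linear form, contrary to assumption; hence $r\ge 2$ and Theorem 2.2 applies. Since $f$ splits completely over $\rr$, the number of real linear factors is $\tau=d$, and the conclusion $\tau\le r$ of Theorem 2.2 becomes $d\le r = L_{\rr}(f)$. Coupled with $L_{\rr}(f)\le d$, this yields $L_{\rr}(f)=d$.

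There is no real obstacle here: the only nontrivial ingredient is Theorem 2.2, and the normalization of each summand into $(\cos\theta_j,\sin\theta_j)$ form is a routine rescaling. The role of the hypothesis that $f$ is not a $d$-th power is simply to guarantee $r\ge 2$, which is exactly the mild restriction needed to invoke Theorem 2.2.
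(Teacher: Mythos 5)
Your proof is correct and follows exactly the route the paper intends: the corollary is stated as an immediate consequence of Theorem 2.2 (giving $\tau = d \le r$ after the routine normalization and the observation that $r \ge 2$), combined with the upper bound $L_{\rr}(f) \le \deg f$ from Theorem 2.4. The paper cites this as \cite[Cor.4.11]{Re1} without writing out the argument, and your write-up supplies precisely that derivation, including the correct use of honesty and the role of the ``not a $d$-th power'' hypothesis.
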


\begin{corollary}
If $f \in \rr[x,y]$ is a product of $d$ real linear forms and not a
$d$-th power, and $g \in \rr[x,y]$ is apolar to $f$, with $\deg g <
d$, then $g$ cannot be square-free.
\end{corollary}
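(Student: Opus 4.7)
The plan is to argue by contradiction: assuming such a square-free $g$ exists, Theorem 2.1 applied over $\rr$ produces a real representation of $f$ with strictly fewer than $d$ summands, which directly contradicts Corollary 2.3.

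Concretely, I would suppose $g \in \rr[x,y]$ is apolar to $f$ with $r := \deg g < d$, and that $g$ is square-free in the Sylvester-form sense, i.e., $g = \prod_{j=1}^r (-\beta_j x + \alpha_j y)$ with real, pairwise non-proportional linear factors (matching the reading of ``square-free'' attached to a Sylvester form over $K$ in the remark after Theorem 2.1). The apolarity condition $g(D)f = 0$ is literally the kernel system \eqref{E:S4} with $K = \rr$, so Theorem 2.1 supplies real scalars $\lambda_1,\dots,\lambda_r$ for which
\[
f(x,y) = \sum_{j=1}^r \lambda_j (\alpha_j x + \beta_j y)^d,
\]
and hence $L_{\rr}(f) \le r < d$.

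This last inequality contradicts Corollary 2.3, which forces $L_{\rr}(f) = d$ whenever $f$ is a product of $d$ real linear forms and not a $d$-th power, completing the argument. There is no real obstacle: the proof is a direct juxtaposition of Theorem 2.1 (in its over-$\rr$ form) and Corollary 2.3. The only point that deserves attention is that ``square-free'' must be read as ``product of pairwise distinct $\rr$-linear factors'' so that Theorem 2.1 is applicable over $\rr$ rather than merely over $\cc$; without splitting over $\rr$, one could only conclude $L_{\cc}(f) \le r$, which is strictly weaker and would not contradict the real rank statement of Corollary 2.3.
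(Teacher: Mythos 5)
Your argument is correct and is exactly the intended (unstated) proof: the paper presents this corollary as an immediate consequence of Theorem 2.1 applied over $\rr$ together with Corollary 2.3, which is precisely the juxtaposition you make. Your closing caveat is also well taken --- the statement really does require reading ``square-free'' as ``product of pairwise distinct real linear factors,'' since under the weaker reading it fails (e.g.\ $x^2-xy+y^2$ is square-free, real, of degree $2<3$, and apolar to $xy(x+y)$, which is a product of three distinct real linear forms).
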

We shall also need the following result from \cite{Re1}.
\begin{theorem}\cite[Thm.4.10]{Re1}
If $f \in K[x,y]$, then $L_K(f) \le \deg f$.
\end{theorem}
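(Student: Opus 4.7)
The plan is to produce a Sylvester form of degree $d$ for $f$ over $K$---a square-free $h\in K[x,y]$ of degree $d$ that is apolar to $f$ and splits completely over $K$---so that Theorem 2.1 immediately yields $L_K(f)\le d$.

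Since $K\supseteq\qq$ is infinite, I would fix $d-1$ distinct elements $c_1,\dots,c_{d-1}\in K$ and consider the two-parameter family
\[
h_{\al,\be}(x,y) := (\al x+\be y)\prod_{j=1}^{d-1}(x-c_jy),\qquad \al,\be\in K,
\]
whose members automatically split over $K$. Because $\deg h_{\al,\be}=d=\deg f$, the apolarity condition $h_{\al,\be}(D)f=0$ (the single-row version of \eqref{E:S4} with $r=d$) collapses to one homogeneous linear equation $E_0(c)\al+E_1(c)\be=0$ over $K$, where $E_0,E_1$ are explicit polynomials in $c=(c_1,\dots,c_{d-1})$ and the coefficients of $f$. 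Thus a nontrivial solution $(\al_0,\be_0)=(E_1(c),-E_0(c))\in K^2$ always exists, giving a degree-$d$ apolar form $h := h_{\al_0,\be_0}$ that splits over $K$.

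This $h$ fails to be square-free exactly when $\al_0 x+\be_0 y$ is proportional to some $x-c_jy$, equivalently when $E_0(c)=c_jE_1(c)$. I would complete the proof by showing that each of the $d-1$ polynomials $E_0(c)-c_jE_1(c)\in K[c_1,\dots,c_{d-1}]$ is not identically zero; then the union of their vanishing loci is a proper Zariski-closed subset of $K^{d-1}$, and, $K$ being infinite, a good $c$ with distinct coordinates exists, producing the required Sylvester form.

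The main obstacle is this non-vanishing claim. To handle it I would argue by contradiction: if, after relabeling, $E_0-c_1E_1\equiv 0$, then for every distinct $c_1,\dots,c_{d-1}\in K$, the form $(x-c_1y)^2\prod_{i=2}^{d-1}(x-c_iy)$ is apolar to $f$. Fixing $c_1\in K$ and letting the remaining $c_i$ range over $\bar K$, the Zariski closure of these apolar forms is the full $(d-1)$-dimensional subspace $(x-c_1y)^2\cdot\bar K[x,y]_{d-2}$; by non-degeneracy of the apolarity pairing on $(d-2)$-forms this would force $(x-c_1y)^2(D)f=0$ for every $c_1\in K$. Expanding $(x-c_1y)^2=x^2-2c_1xy+c_1^2y^2$ and reading the resulting apolarity equations as polynomial identities in $c_1$ then forces every coefficient of $f$ to vanish, contradicting $f\neq 0$.
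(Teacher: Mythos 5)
The paper never proves this statement---it is imported wholesale from \cite[Thm.~4.10]{Re1} as a tool---so there is no in-paper proof to measure you against; I can only judge your argument on its own terms, and it is correct. It also follows the route the surrounding machinery dictates: for $r=d$ the Sylvester condition \eqref{E:S4} is a single homogeneous linear equation on the coefficients of $h$, so one only has to exhibit a square-free solution that splits over $K$, and Theorem 2.1 then supplies the $\la_j\in K$. Your two-parameter family $(\al x+\be y)\prod_{j}(x-c_jy)$ and the genericity argument carry this out; in particular the key non-vanishing claim is right, since $E_0(c)-c_jE_1(c)$ is exactly the apolarity pairing of $f$ with $(x-c_jy)^2\prod_{i\ne j}(x-c_iy)$, its identical vanishing forces (by linearity of apolarity and the spanning of the degree-$(d-2)$ forms by the products $\prod_{i\ne j}(x-c_iy)$, plus nondegeneracy of the apolarity pairing in characteristic $0$) that $f_{xx}-2c_jf_{xy}+c_j^2f_{yy}\equiv 0$ for all $c_j$, hence $f=0$. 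Three small points you should make explicit in a final write-up: (i) the ``nontrivial solution'' $(E_1(c),-E_0(c))$ degenerates to $(0,0)$ exactly when $E_0(c)=E_1(c)=0$, but such $c$ already satisfy $E_0(c)=c_jE_1(c)$ and so lie in the bad locus you avoid, so no harm is done; (ii) ``Zariski closure'' is heavier than needed---the linear span of the coefficient vectors $(1,-e_1(c),e_2(c),\dots)$ is already all of $K[x,y]_{d-2}$ for $c$ ranging over the infinite field $K$; and (iii) the final step, deducing $f=0$ from the vanishing of all second partials, requires $d\ge 2$, so the cases $d\le 1$ should be dismissed separately (trivially, since $f$ is then itself a constant multiple of a $d$-th power).
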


The next tool is an exercise
in a first course in algebraic number theory. We include the proof for
completeness. (See \cite[p.158(Lemma 3)]{BSh} for a more incisive, but less
elementary, proof.)
Recall that $\ze_d = e^{\frac{2\pi i}d}$.
\begin{theorem}
Suppose $m,n$ are integers. Then $\ze_m \in \qq(\ze_n)$ if and only if $m \ | \
n$ or $n$ is odd and $m \ | \ 2n$. 
\end{theorem}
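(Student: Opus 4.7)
\medskip

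The plan is to reduce the problem, via the compositum formula $\qq(\ze_m,\ze_n)=\qq(\ze_{\mathrm{lcm}(m,n)})$ and the degree formula $[\qq(\ze_k):\qq]=\varphi(k)$, to a purely number-theoretic question about when $\varphi$ is preserved under multiplying by an integer.

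\medskip

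For the \emph{if} direction I would handle the two cases separately. If $m\mid n$, then $\ze_n^{n/m}$ is a primitive $m$-th root of unity and lies in $\qq(\ze_n)$. If $n$ is odd and $m\mid 2n$, I would first verify that $-\ze_n$ is a primitive $2n$-th root of unity (its order must be a multiple of $n$ because $\ze_n=(-\ze_n)^{1+n}\cdot\ze_n^{-n}$ when $n$ is odd, and a multiple of $2$ since $(-\ze_n)^n=-1\neq 1$; the least common multiple is $2n$ because $\gcd(2,n)=1$), so $\ze_{2n}\in\qq(\ze_n)$ and hence $\ze_m=\ze_{2n}^{2n/m}\in\qq(\ze_n)$.

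\medskip

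For the \emph{only if} direction, set $\ell=\mathrm{lcm}(m,n)$ and assume $\ze_m\in\qq(\ze_n)$. Then the compositum equals $\qq(\ze_n)$, so $\qq(\ze_\ell)=\qq(\ze_n)$; taking degrees over $\qq$ gives $\varphi(\ell)=\varphi(n)$. Writing $k=\ell/n$, I would then use multiplicativity of $\varphi$ to compute
\[
\frac{\varphi(\ell)}{\varphi(n)}
=\prod_{p\mid n}p^{v_p(k)}\;\cdot\!\!\prod_{\substack{p\mid k\\ p\nmid n}}\!\! p^{v_p(k)-1}(p-1),
\]
where $v_p$ denotes the $p$-adic valuation. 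Each factor on the right is a positive integer $\ge 1$, so equality to $1$ forces every factor to be $1$. The first product forces $v_p(k)=0$ for every $p\mid n$; the second product forces each prime $p\mid k$ (with $p\nmid n$) to satisfy $p=2$ and $v_p(k)=1$. Hence $k\in\{1,2\}$, and if $k=2$ then $2\nmid n$. Combining with $m\mid \ell=kn$ yields exactly the claimed alternatives.

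\medskip

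The genuinely substantive point is the prime-by-prime analysis of $\varphi(\ell)/\varphi(n)$; the rest is bookkeeping using the standard facts about cyclotomic fields. Once the factorization above is written down, the conclusion is immediate because each factor is visibly at least $1$, which severely constrains $k$.
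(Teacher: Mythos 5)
Your proposal is correct and follows essentially the same route as the paper: both arguments reduce to showing $\ze_{\mathrm{lcm}(m,n)}\in\qq(\ze_n)$ (you via the compositum formula, the paper via the same B\'ezout identity that proves that formula), then compare degrees to get $\varphi(\mathrm{lcm}(m,n))=\varphi(n)$ and conclude $\mathrm{lcm}(m,n)\in\{n,2n\}$. The only real difference is one of detail: you spell out the prime-by-prime analysis of $\varphi(\ell)/\varphi(n)$ that the paper leaves as a known fact, and in the ``if'' direction you verify directly that $-\ze_n$ is a primitive $2n$-th root of unity where the paper writes out the equivalent power identity.
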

\begin{proof}
Note that $\zeta_m = \zeta_{mt}^t$. If $n$ is odd and $m$ divides $2n$
but not $n$, then  $m = 2u$ and $n= tu$ with odd $t,u$, so
$\zeta_m = \zeta_{2n}^t = -\zeta_{2n}^{t+tu} = -\zeta_n^{t(u+1)/2}
\in\qq(\ze_n)$.

Conversely, let $g = gcd(m,n)$ so that $m = gr, n = gs$, where
$gcd(r,s) = 1$, and let $q = grs = lcm(m,n)$.  Then $\zeta_m =
\zeta_q^s$ and $\zeta_n = \zeta_q^r$. Now
choose integers $e,f$ so that $es + fr = 1$. We  have
$\ze_m^e\ze_n^f=\ze_q^{es+fr} = \ze_q$. Since $\ze_m \in \qq(\ze_n)$, it
follows that $\ze_q \in \qq(\ze_n)$, so $\qq(\ze_q) \subseteq
\qq(\ze_n)$, but since $n \ | \ q$, the converse inclusion holds as
well, and so $\qq(\ze_q) = \qq(\ze_n)$. This in turn implies that
$\Phi(n) = \Phi(q)$. Since $n \ | \ q$, this implies that $n = q$ (and
$gs=grs$, so $r=1$ and $m \ | \ n$) or $n$ is odd and $q = 2n$ (and
$grs=2gs$, so $r=2$ and $m \ | \ 2n$).
\end{proof}
\begin{corollary}
If $m \ge 3$, then  $\ze_m \not\in \qq(\ze_{m \pm 1})$.
\end{corollary}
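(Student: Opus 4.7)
The plan is to apply the just-proved Theorem 2.5 with $n = m+1$ and with $n = m-1$ and simply verify that neither of the two sufficient conditions (``$m \mid n$'' or ``$n$ odd and $m \mid 2n$'') can hold when $m \ge 3$.

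First I would take $n = m+1$. The first condition $m \mid m+1$ forces $m \mid 1$, hence $m = 1$, contradicting $m \ge 3$. For the second condition, $n = m+1$ being odd forces $m$ to be even, and $m \mid 2(m+1)$ reduces at once to $m \mid 2$, so $m \le 2$, again a contradiction.

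The case $n = m-1$ is handled symmetrically: $m \mid m-1$ gives $m \mid 1$, while $m$ even together with $m \mid 2(m-1)$ gives $m \mid 2$. In both subcases the only values of $m$ compatible with Theorem 2.5 are $m \le 2$, so for $m \ge 3$ we conclude $\zeta_m \notin \qq(\zeta_{m\pm 1})$.

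There is no real obstacle here; the content of the corollary is entirely absorbed by Theorem 2.5, and the work is just four easy divisibility checks. The only thing to be careful about is to treat the ``$n$ odd'' branch of Theorem 2.5 in both the $n = m+1$ and $n = m-1$ cases, rather than dismissing it by parity of $m$ alone.
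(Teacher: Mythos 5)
Your proof is correct and is exactly the intended argument: the paper states this corollary as an immediate consequence of the preceding theorem (Theorem 2.6 in the paper's numbering, not 2.5) and omits the proof, which amounts to precisely the four divisibility checks you carry out. Your care in checking the ``$n$ odd'' branch for both $n=m+1$ and $n=m-1$, rather than dismissing it, is appropriate and the checks are all right.
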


For our final result, we make a minor gloss on the work of Nagell \cite{N};
see also the beautiful short proof of Szymiczek \cite{Sz}.

\begin{theorem}
Suppose $F = \qq(\sqrt{-m})$, where $m$ is a square-free positive
integer. Then there exist solutions to either of the equations
\begin{equation}\label{E:2.7}
r^2 + s^2 = -1,\qquad rs(r^2-s^2) \neq 0, \quad r, s \in F
\end{equation}
\begin{equation}\label{E:2.8}
t^2 + u^2 = -2,\qquad tu(t^2-u^2) \neq 0, \quad t,u  \in F
\end{equation}
if and only if $m \not\equiv 7 \mod 8$.
\end{theorem}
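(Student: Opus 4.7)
My plan is to reduce everything to Nagell's computation of the Stufe of $\qq(\sqrt{-m})$, and then add the ``gloss'' of upgrading a bare two-square representation of $-1$ to a nondegenerate one. I take as the cited input (Nagell \cite{N}, Szymiczek \cite{Sz}) the following: for a positive square-free integer $m$, the element $-1$ is a sum of two squares in $F = \qq(\sqrt{-m})$ if and only if $m \not\equiv 7 \mod 8$ (equivalently, $s(F) \le 2$).

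For the forward direction, I show that the existence of any solution to \eqref{E:2.7} or to \eqref{E:2.8} forces $-1$ to be a sum of two squares in $F$. A solution to \eqref{E:2.7} gives this directly. For a solution $(t,u)$ to \eqref{E:2.8}, I apply the Brahmagupta-style identity
\[
\left(\frac{t+u}{2}\right)^{\!2} + \left(\frac{t-u}{2}\right)^{\!2} = \frac{(t+u)^2 + (t-u)^2}{4} = \frac{t^2+u^2}{2} = -1,
\]
which again exhibits $-1$ as a sum of two squares in $F$. Nagell's theorem then yields $m \not\equiv 7 \mod 8$.

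For the converse, assume $m \not\equiv 7 \mod 8$. Nagell produces $r_0, s_0 \in F$ with $r_0^2 + s_0^2 = -1$, and my task is to perturb these into a nondegenerate solution of \eqref{E:2.7} (which suffices, since \eqref{E:2.7} is one of the two allowed equations). I would use the orthogonal action of the rational unit circle: for any $(a,b) \in \qq^2$ with $a^2+b^2 = 1$, the pair $(r,s) = (ar_0 + bs_0,\, -br_0 + as_0)$ still satisfies $r^2 + s^2 = r_0^2 + s_0^2 = -1$. The set of $\qq$-points on $a^2+b^2=1$ is infinite (use the classical parametrization $a = \frac{1-\tau^2}{1+\tau^2}, b = \frac{2\tau}{1+\tau^2}$), while each of the degeneracy conditions $r=0,\ s=0,\ r=s,\ r=-s$ is a linear equation in $(a,b)$ and so excludes at most two points of the unit circle. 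Hence some rational $(a,b)$ produces a nondegenerate solution.

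The only genuine input is Nagell's theorem; the Brahmagupta identity and the rotation argument are the ``gloss.'' The main thing to watch is the edge case where one of $r_0, s_0$ vanishes (e.g.\ $m=1$ with $s_0=0$) or where $r_0 = \pm s_0$ (e.g.\ $m=2$); but since at least one of $r_0, s_0$ is nonzero, the orthogonal rotation still carries $(r_0,s_0)$ off of each of the four degenerate lines, so no separate argument is needed.
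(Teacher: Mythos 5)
Your proposal is correct, and its overall skeleton (reduce to Nagell's theorem on $s(\qq(\sqrt{-m}))$, then ``fix up'' degeneracy) matches the paper's; but the fix-up step is genuinely different. The paper first shows \eqref{E:2.7} and \eqref{E:2.8} are fully equivalent via $(t,u)=(r+s,r-s)$, noting $tu(t^2-u^2)=4rs(r^2-s^2)$, and then handles nondegeneracy by \emph{classifying} the degenerate solutions of $r^2+s^2=-1$: up to permutation they are only $(\pm i,0)$ and $(\pm\tfrac{\sqrt{-2}}{2},\pm\tfrac{\sqrt{-2}}{2})$, which live in $\qq(\sqrt{-m})$ only for $m=1,2$; for those two fields it exhibits explicit nondegenerate solutions, e.g.\ $(\tfrac34)^2+(\tfrac{5i}{4})^2=-1$ and $7^2+(5\sqrt{-2})^2=-1$. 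You instead act on an arbitrary Nagell solution by the rational orthogonal group: since each of the four degenerate loci $r=0$, $s=0$, $r=\pm s$ is a nontrivial linear condition on $(a,b)$ (nontrivial because $(r_0,s_0)\neq(0,0)$, as $r_0^2+s_0^2=-1$), each meets the conic $a^2+b^2=1$ in at most two points, while the circle has infinitely many rational points. This is uniform in $m$, needs no explicit numerics, and would work over any field containing $\qq$; the paper's version is shorter and more concrete but relies on the small classification plus two lucky identities. One small omission: under the natural reading of the theorem (each of \eqref{E:2.7} and \eqref{E:2.8} is solvable iff $m\not\equiv 7\!\!\mod 8$), your converse should also produce a nondegenerate solution of \eqref{E:2.8}; this is immediate from your \eqref{E:2.7} solution via $(t,u)=(r+s,r-s)$, since $tu(t^2-u^2)=4rs(r^2-s^2)\neq 0$, but you should say so rather than dismiss it.
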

\begin{proof}
First note that if \eqref{E:2.7} holds and $(t,u) = (r+s,r-s)$, then
$t^2+u^2 = 2(r^2+s^2)=-2$ and $tu(t^2-u^2) = 4rs(r^2-s^2)$, so
\eqref{E:2.8} holds. This argument goes the other way with $(r,s) =
(\frac{t+u}2,\frac{t-u}2)$, and so it suffices to
prove the theorem for  \eqref{E:2.7}.

Nagell \cite{N} proves that $s(\qq(\sqrt{-m}))=2$ (that is, there is a
solution to $r^2+s^2=-1$ in 
$\qq(\sqrt{-m})$) if and only if  $m \not\equiv 7 \mod 8$, so all we
need to do is consider the additional condition $ rs(r^2-s^2) \neq
0$. If $r^2+s^2=-1$ and $rs(r^2-s^2) = 0$, then  up to permutation,
$(r,s) = (\pm i,0)$ or 
$(\pm \frac{\sqrt{-2}}2,\pm \frac{\sqrt{-2}}2)$. These solutions are
relevant to $\qq(\sqrt{-m})$ only  when $m=1,2$, in which case the
following alternatives suffice:   
\[
\qq(\sqrt{-1}): \quad \left(\tfrac 34\right)^2 + \left(\tfrac
{5i}4\right)^2 = -1, \qquad 
\qq(\sqrt{-2}): \quad 7^2 + (5\sqrt{-2})^2 = -1.
\]
\end{proof}

\section{Three ranks}
Our general strategy is straightforward. Suppose $d=2k-1$ is
odd. Choosing $r=k$, we see that \eqref{E:S4} is a $k \times (k+1)$
linear system, which in general has a unique solution. We consider
a form $f$ of degree $2k-1$ which is a product of real linear factors,
so $L_{\rr}(f) = 2k-1$. We also choose $K$ to be the field generated
by the coefficients of this unique representation of $f$ over $\cc$,
so $L_K(f) = k$; necessarily, $f \in K[x,y]$.
 Finally, we somehow find a representation 
of rank between $k$ and $2k-1$ over a non-real field which does not
contain the rank $k$ representation.  If $d=2k$, the same heuristic
applies, but there will 
be, in general, infinitely many representations of rank $k+1$. In
certain cases though, each of these representations must contain a specific
non-real root of unity $\ze$.
\begin{theorem}
If $d \ge 5$, then there exists a binary form $p_d$ of degree $d$ which
takes at least three different ranks.
\end{theorem}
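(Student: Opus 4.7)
The plan is to prove the explicit claim made in the introduction: for $k \ge 3$, the ranks of $p_{2k-1}(x,y)=x^{k-1}y^{k-1}(x-y)$ over $\qq(\ze_{k+1})$, $\qq(\ze_k)$, and $\rr$ are $k$, $k+1$, and $2k-1$, and those of $p_{2k}(x,y)=x^ky^k$ are $k+1$, $k+2$, and $2k$. These three values are distinct when $k\ge 3$, which handles every degree $d\ge 5$.

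The computational engine is the Sylvester system of Theorem 2.1. For $p_{2k-1}$ the only nonzero coefficients of \eqref{E:S1} are $a_{k-1}=-a_k=1/\binom{2k-1}{k-1}$, so \eqref{E:S5} collapses to the one-step recursion $c_{k-1-\ell}=c_{k-\ell}$; a short induction shows that no nonzero apolar form exists in degrees below $k$, that the degree-$k$ space is one-dimensional and spanned by $h(x,y)=(x^{k+1}-y^{k+1})/(x-y)$, and that the degree-$(k+1)$ space contains $h_*(x,y)=xy(x^{k-1}+x^{k-2}y+\cdots+y^{k-1})$. For $p_{2k}$ only $a_k$ is nonzero, which yields no apolar form in degrees $\le k$, a two-dimensional degree-$(k+1)$ space spanned by $x^{k+1}$ and $y^{k+1}$, and a degree-$(k+2)$ space containing $h_\star(x,y)=xy(x^k-y^k)$.

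The three ranks now line up. Each $p_d$ is a product of $d$ real linear forms and not a $d$-th power, so Corollary 2.3 delivers $L_\rr(p_d)=d$. For the smallest rank, the forms $h$ and $x^{k+1}-y^{k+1}$ split into distinct linear factors over $\qq(\ze_{k+1})$ since their roots are $(k+1)$-th roots of unity, so Theorem 2.1 gives the upper bounds $k$ (odd case) and $k+1$ (even case), matched below by the nonexistence of apolar forms in lower degrees. For the middle rank over $\qq(\ze_k)$, the Sylvester forms $h_*$ and $h_\star$ have roots $x=0$, $y=0$, and $y=\ze_k^j x$, all lying in $\qq(\ze_k)$ and pairwise distinct, yielding the upper bounds $k+1$ and $k+2$.

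The main obstacle is the complementary lower bound over $\qq(\ze_k)$, and Corollary 2.7 is the decisive input. In the odd case the unique degree-$k$ Sylvester candidate is $h$, whose roots are the nontrivial $(k+1)$-th roots of unity, and these cannot all lie in $\qq(\ze_k)$ since $\ze_{k+1}\notin\qq(\ze_k)$ for $k\ge 3$. In the even case every degree-$(k+1)$ apolar form has the shape $c_0x^{k+1}+c_{k+1}y^{k+1}$, and if it is square-free its roots form a coset of the group of $(k+1)$-th roots of unity, so if they all lay in $\qq(\ze_k)$ the ratio of any two distinct roots would force $\ze_{k+1}\in\qq(\ze_k)$, again contradicting Corollary 2.7. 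This pins the middle ranks at $k+1$ and $k+2$, and since $k+1<2k-1$ and $k+2<2k$ for $k\ge 3$, the three ranks are genuinely distinct.
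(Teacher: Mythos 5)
Your proposal is correct and follows essentially the same route as the paper: the same forms $x^{k-1}y^{k-1}(x-y)$ and $x^ky^k$, the same Sylvester-system computation of the apolar forms in degrees $k$, $k+1$, $k+2$, Corollary 2.3 for the real rank, and Corollary 2.7 ($\ze_{k+1}\notin\qq(\ze_k)$) as the decisive input for the middle rank. The only cosmetic difference is your phrasing in the even case about ``the ratio of any two distinct roots''; what you need (and what the paper says) is that the ratio of two \emph{consecutive} roots in the coset $u\,\mu_{k+1}$ is $\ze_{k+1}$ itself, which suffices.
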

\begin{proof}
Let $p_{2k-1} = \binom{2k-1}k x^{k-1}y^{k-1}(x-y)$, so that in
\eqref{E:S4}, $a_{k-1} = 1$, $a_k=-1$ and $a_i = 0$ otherwise. 
First, with $r=k-1$, we see that the matrix from \eqref{E:S4} is non-singular:
\[
\begin{pmatrix}
0 & 0 &\cdots  &  0 & 1  \\
0 & 0 & \cdots  & 1 & -1  \\
\vdots & \vdots  & \vdots &\vdots & \vdots \\
1& -1 & \cdots  & 0 & 0 \\
-1 & 0 & \cdots   & 0 & 0
\end{pmatrix}.
\]
It follows that
$L_{\cc}(p_{2k-1}) > k-1$. On taking
$r=k$,  \eqref{E:S4} becomes:
\begin{equation}\label{E:odd}
\begin{pmatrix}
0 & 0 &\cdots & 0 & 1 & -1 \\
0 & 0 & \cdots& 1 & -1 & 0 \\
\vdots & \vdots & \ddots & \vdots &\vdots & \vdots \\
1& -1 & \cdots & 0 & 0 & 0
\end{pmatrix}
\cdot
\begin{pmatrix}
c_0\\c_1\\ \vdots \\ c_k
\end{pmatrix}
=\begin{pmatrix}
0\\0\\ \vdots \\ 0
\end{pmatrix}.
\end{equation}
Clearly, the only solution to \eqref{E:odd} has $c_i = c$ for all $i$,
so that up to multiple,
\[
h(x,y) = \sum_{t=0}^k x^{k-t}y^t = \frac{x^{k+1}-y^{k+1}}{x-y} =
 \prod_{j=1}^k (x - \ze_{k+1}^jy),
\]
and so $L_K(p_{2k-1}) = k$ if and only if $\ze_{k+1} \in K$; in
particular,  $L_{\qq(\ze_{k+1})}(p_{2k-1}) = k$. Since $p_{2k-1}$ is
hyperbolic, it follows from Corollary 2.3 that
$L_{\rr}(p_{2k-1}) = 2k-1$. 

Now set $r = k+1$, so that  \eqref{E:S4} becomes:
\begin{equation}\label{E:odd2}
\begin{pmatrix}
0&0 & 0 &\cdots & 0 & 1 & -1 &0\\
0&0 & 0 & \cdots& 1 & -1 & 0 &0\\
\vdots & \vdots & \vdots & \ddots & \vdots & \vdots & \vdots &\vdots \\
0 &1& -1 & \cdots & 0 & 0 & 0 &0
\end{pmatrix}
\cdot
\begin{pmatrix}
c_0\\c_1\\ \vdots \\ c_{k+1}
\end{pmatrix}
=\begin{pmatrix}
0\\0\\ \vdots \\ 0
\end{pmatrix}.
\end{equation}
The system \eqref{E:odd2} implies $c_1= \dots = c_k$,
but places no conditions on $c_0$
and $c_{k+1}$ . In particular, we may choose $c_0=c_{k+1}
= 0$ and  $c_1= \dots = c_k=1$, to get a Sylvester polynomial over
 $\qq(\ze_{k})$:
\[
h(x,y) = \sum_{t=1}^k x^{k+1-t}y^t = xy\left( \frac{x^{k}-y^{k}}{x-y}\right)=
xy\prod_{j=1}^{k-1} (x - \ze_{k}^jy).
\]
It follows that $L_{\qq(\ze_{k})}(p_{2k-1}) \le k+1$. Since $\ze_{k+1}
\notin \qq(\ze_{k})$ by Corollary 2.7, it follows that
$L_{\qq(\ze_{k})}(p_{2k-1}) = k+1$. 

Since
\[
\begin{gathered}
c_0x^{k+1} + c_1 (x^ky+ \dots + xy^k) + c_{k+1}y^{k+1} = \\
(c_0x + (c_1-c_0)y)(x^k + \dots + y^k) + (c_{k+1}-c_1+c_0)y^{k+1},
\end{gathered}
\]
it is not hard to show that the apolar ideal of $p_{2k-1}$ is generated by 
$x^k + \dots + y^k$ and $y^{k+1}$; note that $k + (k+1) = (2k-1) + 2$.
It seems to be a quite difficult question to determine
which fields $K$ have the property that, for a suitable choice of
$c_i$'s,  this form is  square-free and
splits over $K$. We return to this
type of question in the next section.

Now suppose that $p_{2k}(x,y) = \binom{2k}k x^ky^k$, so $a_k=1$ and
$a_i=0$ otherwise. (This example is also discussed in
\cite[Thm.5.5]{Re1}.) Taking $r=k$, we note that the matrix 
\begin{equation}
\begin{pmatrix}
0  & \cdots & 0 & 1\\
0  & \cdots & 1 & 0\\
 \vdots & \ddots & \vdots & \vdots \\
 1 & \cdots & 0 & 0 
\end{pmatrix}
\end{equation}
is nonsingular, hence there are no representations of rank $k$. For
$r=k+1$,

\begin{equation}
\begin{pmatrix}
0 & 0 & \cdots & 0 & 1 & 0 \\
0 & 0 & \cdots & 1 & 0 & 0 \\
\vdots & \vdots & \ddots & \vdots & \vdots &\vdots \\
0 & 1 & \cdots & 0 & 0 & 0
\end{pmatrix}
\cdot
\begin{pmatrix}
c_0\\c_1\\ \vdots \\ c_{k+1}
\end{pmatrix}
=\begin{pmatrix}
0\\0\\ \vdots \\ 0
\end{pmatrix}
\implies c_1 = \cdots =c_k = 0. 
\end{equation}
Thus every Sylvester form of degree $k+1$ has the shape $h(x,y) = \al x^{k+1} -
\be y^{k+1}$ and the apolar ideal of $p_{2k}$ is generated by
$x^{k+1}$ and $y^{k+1}$. 
If $h$ has distinct factors, then $\al\be \neq 0$
and
\[
h(x,y) = \al \prod_{j=0}^k (x - \ze_{k+1}^ju y),
\]
where $\al u^{k+1} = \be$. If $h$ splits over $K$, then $u, \ze_{k+1}u
\in K$, hence $\ze_{k+1} \in K$ and $\qq(\ze_{k+1}) \subseteq K$. In
particular, by taking $\al = \be = 1$, we see that $x^{k+1} - y^{k+1}$
is a Sylvester form for $p_{2k}$ over  $\qq(\ze_{k+1})$, and so
$L_{\qq(\ze_{k+1})}(p_{2k}) = k+1$. Since $x^ky^k$ is hyperbolic, 
 $L_{\rr}(p_{2k}) = 2k$. 

Any expression of rank $k+2$ over $K$
would have a Sylvester form of shape
\[
(\al x+ \be y)x^{k+1} + (\ga x+ \de y)y^{k+1}.
\]
In particular,  $xy(x^k-y^k)$ splits over
 $\qq(\ze_{k})$, which does not contain $\ze_{k+1}$ and so we have
 $L_{\qq(\ze_{k})}(p_{2k})= k+2$. 
\end{proof}

Here are explicit representations of $p_5$, $p_6$
and $p_7$ as sums of powers of linear forms.

\begin{example} For $k=3$, the following two formulas may be directly
  verified (as usual, $\om = \ze_3$ and $i=\ze_4$):
\[
\begin{gathered}
p_5(x,y) = 10x^2y^2(x-y)  \\
=\tfrac 14 \cdot \left( (-1-i)(x+iy)^5 +
  2(x-y)^5 + (-1+i)(x-iy)^5\right) \in \qq(\ze_4)[x,y]
\\
= x^5 - y^5 + \tfrac 1{\om-\om^2} \cdot \left( \om^2(x + \om y)^5
  -\om(x + \om^2 y)^5 \right)\in \qq(\ze_3)[x,y].
\end{gathered}
\]
The expressions seem to get more complicated for larger values of $k$.
For example,
\[
\begin{gathered}
(1+2\ze_5 +3\ze_5^2-\ze_5^3)p_7(x,y) = \\\ze_5^4 (x + \ze_5 y)^7 -
\ze_5^2(1+\ze_5+\ze_5^2)(x + \ze_5^2 y)^7 + \ze_5(1+\ze_5+\ze_5^2)(x
+ \ze_5^3 y)^7 - \ze_5(x + \ze_5^4 y)^7.
\end{gathered}
\]
Here, $1+2\ze_5 +3\ze_5^2-\ze_5^3 = i\sqrt{\frac 52 (5+\sqrt5)} \approx 4.25i$.
\end{example}
\begin{example} The representations of $p_{2k}$ of rank $k+1$ are
  given in \cite[Thm.5.5]{Re1}. For $k=3$, taking $w=1$ in
  \cite[(5.6)]{Re1}, we obtain   after some simplification,
\[
\begin{gathered}
p_6(x,y) = 20x^3y^3 
\\
= \tfrac 14 \cdot \left((x+y)^6 +
  i(x+iy)^6-(x-y)^6-i(x-iy)^6 \right) \in \qq(\ze_4)[x,y]
\\
=  \tfrac 13 \cdot \left((x+y)^6 + (x+\om y)^6 +
  (x+\om^2y)^6-3x^6 -3y^6 \right)\in  \qq(\ze_3)[x,y].
\end{gathered}
\]
The evident patterns shown above are easily proved, using the methods
of \cite{Re1}. 
\end{example}

\section{Two more examples}

In this section, we give some additional examples, in which
\eqref{E:S4} is altered only slightly from $p_5$ and $p_6$, but the
results show a sensitivity to arithmetic conditions. Recall that $s(K)
\le 2$ means that there exist $r,s \in K$ so that $r^2+s^2=-1$.

\begin{theorem}
Suppose $f(x,y) = \binom 52 x^3y^2$. Then $L_K(f) = 4$ iff $s(K) \le
2$; otherwise, $L_K(f) = 5$.
\end{theorem}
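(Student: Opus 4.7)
The plan is to apply Sylvester's theorem (Theorem~2.1) to $f = \binom{5}{2}x^3y^2$. In the notation of \eqref{E:S1} we have $a_2 = 1$ and $a_i = 0$ otherwise. For $r \le 3$ the system \eqref{E:S4} forces $c_0 = c_1 = c_2 = 0$, leaving only $h = c_3 y^3$, which is not square-free; hence $L_K(f) \ge 4$ for every $K$, while Theorem~2.5 gives $L_K(f) \le 5$. The question therefore reduces to deciding for which $K$ there is a square-free degree-$4$ Sylvester form for $f$ that splits over $K$.

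For $r = 4$, \eqref{E:S4} forces $c_1 = c_2 = 0$, so every degree-$4$ Sylvester form has the shape $h(x,y) = c_0 x^4 + c_3 xy^3 + c_4 y^4$. The case $c_0 = 0$ gives $y^3 \mid h$; otherwise we normalize $c_0 = 1$ and seek four distinct $u_1,\dots,u_4 \in K$ with $\prod_{i=1}^4 (x - u_i y) = x^4 + \alpha xy^3 + \beta y^4$. Vanishing of the $x^3y$ and $x^2y^2$ coefficients is equivalent to $\sum u_i = 0$ and $\sum u_i^2 = 0$. Setting $A = u_1+u_2 = -(u_3+u_4)$, $B = u_1-u_2$, $D = u_3-u_4$, the two conditions collapse to the single ternary equation $2A^2 + B^2 + D^2 = 0$, while distinctness of the $u_i$ amounts to $BD \ne 0$ together with $B \pm D \ne \pm 2A$.

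The central claim is that this system has a solution in $K$ with distinct $u_i$'s exactly when $s(K) \le 2$. For sufficiency, given $r^2 + s^2 = -1$ in $K$, set $A = 1$, $B = r+s$, $D = r-s$; then $B^2 + D^2 = 2(r^2+s^2) = -2$. For necessity: if $A = 0$, distinctness forces $BD \ne 0$ and $B^2 = -D^2$, giving $\sqrt{-1} = B/D \in K$ and hence $s(K) = 1$; if $A \ne 0$, scale to $A = 1$ to get $B^2 + D^2 = -2$, and apply the Brahmagupta--Fibonacci identity to $\tfrac{1}{2} = \bigl(\tfrac{1}{2}\bigr)^2 + \bigl(\tfrac{1}{2}\bigr)^2$ to obtain $-1 = \bigl(\tfrac{B+D}{2}\bigr)^2 + \bigl(\tfrac{B-D}{2}\bigr)^2$, so $s(K) \le 2$. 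The main obstacle is the distinctness step: when $s(K) \le 2$, we must exhibit at least one $(A,B,D)$ solution avoiding the finitely many bad pairs (those on which $B=0$, $D=0$, or $B \pm D = \pm 2A$). This follows because $K \supseteq \qq$ is infinite and the conic $2A^2+B^2+D^2=0$, once it has one nontrivial $K$-point, has infinitely many, so the complement of the finite bad locus is nonempty. Combining everything yields $L_K(f) = 4$ when $s(K) \le 2$, and $L_K(f) = 5$ otherwise.
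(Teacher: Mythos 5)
Your argument is correct and follows essentially the same route as the paper: reduce via Sylvester's theorem to the degree-$4$ apolar forms $x^4+c_3xy^3+c_4y^4$, translate splitting over $K$ into the conditions $e_1=e_2=0$ on the four roots, and identify solvability of the resulting ternary quadratic $2A^2+B^2+D^2=0$ (the paper's $X^2+Y^2+2Z^2=0$ with $X=w$, $Y=r_1-r_2$, $Z=r_1+r_2$) with $s(K)\le 2$ by the same averaging identity. The one genuine divergence is how distinctness of the four roots is secured. The paper shows that any solution of \eqref{E:dioph} with a repeated root forces $r_3,r_4=r_1(-1\pm\sqrt{-2})$, so the problem can only arise when $\sqrt{-2}\in K$, and then exhibits an explicit alternate solution over $\qq(\sqrt{-2})$ with distinct roots. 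You instead note that the smooth conic $2A^2+B^2+D^2=0$, once it acquires one nontrivial $K$-point, has infinitely many (rational parametrization over the infinite field $K$), while the degeneracy conditions $B=0$, $D=0$, $B\pm D=\pm 2A$ are six lines each meeting the conic in at most two points; this is cleaner and avoids the ad hoc numerical solution, at the mild cost of invoking the parametrization of a conic with a rational point. Your derivation of the lower bound $L_K(f)\ge 4$ from the $r=3$ Sylvester matrix also supplies a detail the paper cites from \cite{CCG} and explicitly omits (though you should note in passing that ruling out square-free apolar forms of degree $3$ also rules out degrees $1$ and $2$, since the apolar ideal is an ideal). Both proofs are complete.
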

\begin{proof}
We already know from \cite{CCG} that $L_{\cc}(f) = 4$, hence $L_K(f)
\ge 4$. This can also be shown directly via Theorem 2.1. We omit the
details. 

Suppose now that $L_K(f) = 4$.  Then, 
\begin{equation}
\begin{pmatrix}
0 & 0 & 1 & 0 & 0 \\
0 & 1 & 0 & 0 & 0\\
\end{pmatrix}
\cdot
\begin{pmatrix}
c_0\\c_1\\c_2 \\ c_3\\c_4
\end{pmatrix}
=\begin{pmatrix}
0\\0
\end{pmatrix}
\implies c_1 = c_2=0
\end{equation}
and $h(x,y) = c_0x^4 + c_3xy^3 + c_4y^4$ is a Sylvester form for $f$
over $K$.  Thus, we are led to the
question: for which choices of $c_i$ and which fields $K$ can such a
square-free form 
split into distinct factors over $K$? 

 If $c_0=0$ then $h$ is not square-free, so we scale to
take $c_0=1$. Then $L_K(f) = 4$ if and only if there exist distinct $r_i \in K$
so that
\[
x^4 + c_3xy^3+c_4y^4 = (x-r_1y)(x-r_2y)(x-r_3y)(x-r_4y);
\]
that is, if and only if the Diophantine system
\begin{equation}\label{E:dioph}
r_1+r_2+r_3+r_4 = r_1r_2+r_1r_3+r_1r_4+r_2r_3+r_2r_4+r_3r_4 = 0
\end{equation}
has a solution in $K$ with distinct $r_i$'s.

We solve \eqref{E:dioph}, first ignoring the restriction to distinct
elements. Putting $r_4 = -(r_1+r_2+r_3)$ into the second equation yields
\[
\begin{gathered}
r_1^2 + r_2^2 + r_3^2 + r_1r_2 + r_1r_3+r_2r_3 = 0 \implies \\
r_3 = -\frac{r_1+r_2}2 \pm \frac {\sqrt{-3 r_1^2 - 2 r_1r_2 - 3r_2^2}}2.
\end{gathered}.
\]
Choose $r_1,r_2 \in K$. We see that $r_3 \in K$ (and so $r_4 \in K$)
if and only if 
\[ -3 r_1^2 - 2 r_1r_2 - 3r_2^2 = -2(r_1+r_2)^2 - (r_1-r_2)^2 = w^2
\]
 is a non-zero square in $K$. Let $(X,Y,Z) =
(w,r_1-r_2,r_1+r_2) \in K^3$. We have (as in the proof of Theorem 2.8)
\[
-2Z^2-Y^2=X^2 \implies \left(\frac XZ \right)^2 + \left(\frac YZ
\right)^2 = -2 \implies  \left(\frac {X+Y}{2Z} \right)^2 + \left(\frac {X-Y}{2Z}
\right)^2 = -1.
\]
Thus, if  $L_K(f) = 4$, then $s(K) \le 2$. The converse is almost immediate.

If \eqref{E:dioph} has repeated $r_i$'s, we may assume without loss of
generality that $r_1=r_2$, hence $r_3,r_4 = r_1(-1 \pm \sqrt{-2})$. 
The only fields in which this solution might occur contain $\sqrt{-2}$,
so if we can find an alternate solution to \eqref{E:dioph} in
$\qq(\sqrt{-2})$, we will be done. It may be checked that
\[
\{r_1,r_2\} = \{5\sqrt{-2}\pm 6\}, \qquad \{r_3,r_4\} = \{-5\sqrt{-2}\pm 8\}
\]
is such an alternate solution to \eqref{E:dioph} with distinct $r_i$.
\end{proof}

Our final result presents another sextic with three different ranks.
\begin{theorem}
Suppose $f(x,y) = \binom 61 x^5y - \binom 63 x^3y^3
=2x^3y(3x^2-10y^2)$. Then $L_K(f) = 
4$ if and only if $s(K) \le 2$. In particular, if $m$ is a positive
square-free integer, and $m \not\equiv 7 \mod 8$, then
$L_{\qq(\sqrt{-m})}(f) = 4$. Further, $L_{\qq(\sqrt{-7})}(f) = 5$.
\end{theorem}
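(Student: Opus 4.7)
The plan is to mimic the strategy of Theorem~4.1: use Sylvester's algorithm (Theorem~2.1) to describe the degree-$4$ apolar forms of $f$, translate ``splits over $K$'' into a Stufe condition, and then exhibit an explicit rank-$5$ representation over $\qq(\sqrt{-7})$. I would first solve \eqref{E:S4} with $r = 4$ for $f = 6 x^5 y - 20 x^3 y^3$ (so $a_1 = 1$, $a_3 = -1$, and the other $a_j$ vanish). The resulting $3 \times 5$ system forces $c_1 = c_3 = 0$ and $c_0 = c_2$, with $c_4$ free, so every degree-$4$ apolar form of $f$ has the shape $c_0(x^4 + x^2 y^2) + c_4 y^4$. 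Square-freeness requires $c_0 \ne 0$, so after rescaling we may take $h(x,y) = x^4 + x^2 y^2 + c y^4$ for some $c \in K$. A quick check also shows that \eqref{E:S4} has trivial kernel for $r < 4$, giving $L_K(f) \ge 4$ for every $K$.

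Next I would determine when $h$ splits into four distinct linear factors over $K$. Viewed as a quadratic in $x^2$, its roots are $x^2 = \tfrac{y^2}{2}(-1 \pm \sqrt{1-4c})$. Setting $\sqrt{1-4c} = \al \in K$ and $\tfrac{-1 \pm \al}{2} = \be_\pm^2$ with $\be_\pm \in K$ yields the identity $\be_+^2 + \be_-^2 = -1$, so $s(K) \le 2$ is necessary. Conversely, any such solution produces the factorization, and the four roots $\pm \be_+, \pm \be_-$ are distinct unless $\be_\pm = 0$ or $\be_+^2 = \be_-^2$; such degenerate solutions can be avoided, exactly as in the proof of Theorem~2.8, by rotating $(\be_+, \be_-)$ along the ``circle'' $r^2 + s^2 = -1$ via the rational parametrization $(\cos\theta, \sin\theta) = \bigl(\tfrac{1-t^2}{1+t^2}, \tfrac{2t}{1+t^2}\bigr)$ with $t \in K$, using that $K$ has infinitely many elements. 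Hence $L_K(f) = 4$ iff $s(K) \le 2$, and Theorem~2.8 immediately gives $L_{\qq(\sqrt{-m})}(f) = 4$ precisely when $m \not\equiv 7 \mod 8$.

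For $K = \qq(\sqrt{-7})$ the above forces $L_K(f) \ge 5$, and for the matching upper bound I would exhibit a square-free degree-$5$ apolar form splitting over $\qq(\sqrt{-7})$. Applying \eqref{E:S4} with $r = 5$ shows that the degree-$5$ apolar forms are cut out only by $c_0 = c_2$ and $c_1 = c_3$. Searching inside the subfamily $h = y(x^4 + x^2 y^2 + c xy^3) = xy(x^3 + xy^2 + c y^3)$ reduces the splitting question to the depressed cubic $t^3 + t + c$ over $\qq(\sqrt{-7})$. Trying $t = 1$ as a root forces $c = -2$ and leaves the factor $t^2 + t + 2$, whose discriminant is $-7$; so
\[
h(x,y) = x y (x - y)(x^2 + xy + 2 y^2)
\]
splits into five distinct linear factors over $\qq(\sqrt{-7})$. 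A short check against \eqref{E:S4} confirms apolarity, and Theorem~2.1 then yields $L_{\qq(\sqrt{-7})}(f) \le 5$.

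The main obstacle is this final construction: the upper bound requires an apolar quintic all of whose roots lie in a \emph{prescribed} imaginary quadratic field, and the happy arithmetic accident is that imposing $xy \mid h$ produces a cubic with rational root $t = 1$ whose complementary quadratic has discriminant exactly $-7$. The distinctness step in the middle equivalence is the most delicate technical point of the biconditional, but it follows from the same circle-rotation trick already used in Theorem~2.8.
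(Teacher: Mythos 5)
Your proposal is correct and follows essentially the same route as the paper: the $r=4$ Sylvester system forcing $h = x^4 + x^2y^2 + c\,y^4$, the reduction of splitting to $r^2+s^2=-1$ via Theorem~2.8, and even the same apolar quintic $xy(x-y)(x^2+xy+2y^2)$ over $\qq(\sqrt{-7})$ (the paper obtains it as $y(x^4+x^2y^2)-2xy^4$ from the generators of the apolar ideal). The only cosmetic difference is that you handle the distinct-roots issue by rotating along the conic, where the paper instead cites the explicit nondegenerate solutions from the proof of Theorem~2.8; both work.
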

\begin{proof} 
Again, taking \eqref{E:S4} for $r=3$ gives a nonsingular matrix
\begin{equation}
\begin{pmatrix}
0 & 1 & 0 & -1 \\
1 & 0 & -1 & 0\\
0 & -1 & 0 & 0\\
-1 & 0 & 0 & 0
\end{pmatrix},
\end{equation}
so $L_{\cc}(f) > 3$. Moving up one,
\begin{equation}
\begin{pmatrix}
0 & 1 & 0 & -1 &0 \\
1 & 0 & -1 & 0 & 0\\
0 & -1 & 0 & 0& 0\\
\end{pmatrix}
\cdot
\begin{pmatrix}
c_0\\c_1\\c_2 \\ c_3 \\ c_4
\end{pmatrix}
=\begin{pmatrix}
0\\0\\0
\end{pmatrix}
\implies c_0=c_2,\quad c_1=c_3 = 0,
\end{equation}
so the possible Sylvester polynomials over $K$ have the shape
$h(x,y) = c_0 x^4 + c_0 x^2y^2 + c_4 y^4.$
If $c_0c_4=0$, then $h$ is not square-free, so we may scale to $c_0 =
1$. Since $h$ is an even polynomial, if $x - ry$ is a factor with $r
\neq 0$ (since $c_4 \neq 0$), then 
so is $x+ry$, hence if $h$ splits over $K$, then there exist $r,s
\in K$ ($r^2 \neq s^2 \neq 0$) so that
\[
x^4 + x^2y^2 + c_4y^4 = (x^2 - r^2y^2)(x^2 - s^2y^2).
\]
Thus, $L_K(f) = 4$ if and only if $K$ is a field in which the equation
\begin{equation}\label{E:4.7}
r^2 + s^2 = -1
\end{equation}
has a solution, $r^2 \neq -\frac 12,0,-1$. As we have seen in the
proof of Theorem 2.8,
this is true precisely  when $s(K) \le 2$, so if $K = \qq(\sqrt{-m})$,
precisely when  $m \not\equiv 7 \mod 8$.

Since  $f$ is hyperbolic, $L_{\rr}(f) = 6$. The previous paragraph
shows that the apolar ideal for $f$ is generated by $x^4+x^2y^2$ and
$y^4$. 
We now wish to find at least one field $K$ for which  $L_K(f) =
5$. Since $K$ must be non-real with $s(K) >2$, we take $K =
\qq(\sqrt{-7})$ and look for a representation with relative rank
5. To this end, observe that $y(x^4+x^2y^2)-2x y^4 = x^4y +x^2y^3 -
2xy^4  
 = xy(x-y)\left(x + \tfrac{1 + \sqrt{-7}}2 y \right) \left(x + \tfrac{1
    - \sqrt{-7}}2 y \right)$
splits over $\qq(\sqrt{-7})$. 
\end{proof}

\end{document}